\newcommand{\Hom}{\operatorname{Hom}\nolimits}
\renewcommand{\mod}{\operatorname{mod}\nolimits}
\newcommand{\Ext}{\operatorname{Ext}\nolimits}
\newcommand{\La}{\Lambda}
\newcommand{\cx}{\operatorname{cx}\nolimits}
\newcommand{\T}{\operatorname{\mathcal{T}}\nolimits}
\newcommand{\C}{\operatorname{\mathcal{C}}\nolimits}
\newcommand{\D}{\operatorname{\mathcal{D}}\nolimits}
\newcommand{\End}{\operatorname{End}\nolimits}
\newcommand{\CM}{\operatorname{CM}\nolimits}
\newcommand{\add}{\operatorname{add}\nolimits}
\newcommand{\thick}{\operatorname{thick}\nolimits}
\newcommand{\s}{\operatorname{\Sigma}\nolimits}
\newcommand{\Tr}{\operatorname{Tr}\nolimits}
\newtheorem{theorem}{Theorem}[section]
\newtheorem*{unnmbrdtheorem}{Theorem}
\newtheorem{corollary}[theorem]{Corollary}
\newtheorem{lemma}[theorem]{Lemma}
\newtheorem{proposition}[theorem]{Proposition}
\theoremstyle{definition}
\theoremstyle{definition}
\theoremstyle{definition}
\newtheorem*{questions}{Questions}
\theoremstyle{definition}
\theoremstyle{definition}
\newtheorem*{examples}{Examples}
\theoremstyle{definition}
\theoremstyle{remark}
\newtheorem*{remark}{Remark}
\theoremstyle{definition}
\theoremstyle{definition}
\DeclareMathOperator{\stabCM}{\underline{CM}}
\begin{document}

\title{Cluster tilting and complexity}
\author{Petter Andreas Bergh \& Steffen Oppermann}
\address{Institutt for matematiske fag \\
  NTNU \\ N-7491 Trondheim \\ Norway}
\email{bergh@math.ntnu.no} \email{Steffen.Oppermann@math.ntnu.no}

\thanks{Both authors were supported by NFR Storforsk grant no.\
167130}

\subjclass[2000]{16P90, 18E30, 18G15}

\keywords{Cluster categories, cluster tilted algebras, complexity}

\maketitle

\begin{abstract}
We study the notion of positive and negative complexity of pairs of
objects in cluster categories. The first main result shows that the
maximal complexity occurring is either one, two or infinite,
depending on the representation type of the underlying hereditary
algebra. In the the second result, we study the bounded derived
category of a cluster tilted algebra, and show that the maximal
complexity occurring is either zero or one whenever the algebra is
of finite or tame type.
\end{abstract}

\section{Introduction}

Cluster categories associated to finite dimensional hereditary
algebras were introduced in \cite{BMRRT}. These $2$-Calabi-Yau
triangulated categories arise as orbit categories of derived
categories, and provide a categorification of the combinatorics of
the cluster algebras introduced in \cite{FominZelevinsky} by Fomin
and Zelevinsky in the acyclic case. They also provide a
generalized framework for classical tilting theory, with the
cluster tilting objects and their endomorphism rings, the cluster
tilted algebras.

Given two objects in a triangulated category defined over a field,
their total cohomology is a $\mathbb{Z}$-graded vector space over
the ground field. It therefore makes sense to study the rate of
growth of the dimensions in both the ``negative" and the ``positive"
direction, thus leading to the notion of negative and positive
complexity. In this paper, we study the complexity of a cluster
category, and show that the maximal complexity occurring depends on
the representation type of the hereditary algebra we start with:

\begin{unnmbrdtheorem}
Let $H$ be a basic finite dimensional hereditary algebra over an
algebraically closed field, and let $\C_H$ be the corresponding
cluster category. Then
$$\sup \{ \cx^*_{\C_H}(X,Y) \mid X,Y \in \C_H \} = \left \{
\begin{array}{ll}
1 & \text{if $H$ has finite type,} \\
2 & \text{if $H$ has tame type,} \\
\infty & \text{if $H$ has wild type.}
\end{array}
\right.$$
\end{unnmbrdtheorem}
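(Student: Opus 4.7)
The argument naturally splits into a common reduction and a three-way case analysis on the representation type of $H$. Using the orbit category description $\C_H = \D^b(H)/F$ with $F = \tau^{-1}[1]$, combined with the hereditary property of $H$, one sees that for indecomposable $H$-modules $X, Y$ and $n$ sufficiently large,
\[
\Hom_{\C_H}(X, Y[n]) \cong \Hom_H(X, \tau^n Y) \oplus \Ext^1_H(X, \tau^{n-1} Y),
\]
because $\Hom_{\D^b(H)}(X, F^i Y[n]) = \Ext^{n+i}_H(X, \tau^{-i} Y)$ vanishes unless $n + i \in \{0, 1\}$. A fundamental domain for $F$ consists of the indecomposable modules together with the shifted indecomposable projectives, so this reduces the complexity calculation to the module case. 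The $2$-Calabi--Yau duality further identifies positive and negative complexity up to swapping the arguments, so it suffices to bound the asymptotics as $n \to +\infty$.

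In the finite case, $\C_H$ has only finitely many indecomposables, so $\dim \Hom_{\C_H}(X, Y[n])$ is bounded uniformly in $n$, giving $\cx^* \leq 1$. Equality follows by taking $X = Y$ indecomposable and exploiting that the Auslander--Reiten translate is periodic on the finite AR quiver, so $\End_H(X) \neq 0$ contributes infinitely often. In the tame case, the Coxeter transformation $c$ has spectral radius $1$ with a non-trivial Jordan block, so $\dim \tau^n Y$ grows at most linearly in $|n|$; this bounds $\dim \Hom_H$ and $\dim \Ext^1_H$ linearly and yields $\cx^* \leq 2$. To match the upper bound, one takes $X$ preinjective and $Y$ preprojective: then $\Ext^1_H(X, \tau^n Y) = 0$ for large $n$, and the Euler form $\langle \dim X, c^n \dim Y\rangle$ equals $\dim \Hom_H(X, \tau^n Y)$, which grows linearly.

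In the wild case, $c$ has spectral radius $\lambda > 1$, so $c^n \dim Y$ grows like $\lambda^n$ for $Y$ preprojective. Repeating the preinjective/preprojective construction yields $\dim \Hom_H(X, \tau^n Y) \sim \mu \lambda^n$ for some $\mu > 0$, exceeding every polynomial bound and forcing $\cx^*(X, Y) = \infty$. The main obstacle in the lower-bound arguments is that the growth of $c^n \dim Y$ on the level of dimension vectors does not immediately translate into growth of a specific Hom-dimension, since the Euler form could cancel the leading contribution; the preinjective/preprojective pairing is precisely what avoids this cancellation, by enforcing the vanishing of $\Ext^1$ and the non-degeneracy of the Euler pairing against the dominant eigenvector of $c$.
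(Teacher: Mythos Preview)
Your overall strategy---pass to module-level Hom and Ext via the orbit description, then read off growth from the Coxeter transformation---is close to the paper's, but the lower-bound construction in the tame and wild cases has a direction error that breaks the argument. You choose $X$ preinjective and $Y$ preprojective and assert that $\dim\Hom_H(X,\tau^nY)=\langle\dim X,\,c^n\dim Y\rangle$ grows as $n\to+\infty$. However, for $Y$ preprojective one has $\tau^nY=0$ in $\mod H$ once $n$ exceeds the distance from $Y$ to the projectives, so both $\Hom_H(X,\tau^nY)$ and $\Ext^1_H(X,\tau^{n-1}Y)$ vanish and the Euler-form identification collapses (indeed $c^n\dim Y$ is no longer the dimension vector of any module). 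The same degree-crossing invalidates your two-term formula for $\Hom_{\C_H}(X,Y[n])$ in exactly this case: your derivation assumed $\tau^{-i}Y$ lies in degree~$0$ for the relevant $i$, which fails for preprojective $Y$ and large $n$. Swapping the roles ($X$ preprojective, $Y$ preinjective) or running the computation for $n\to-\infty$ repairs everything, but as written the lower bounds are not established; the upper-bound argument has the same formula gap for preprojective $Y$, though there it is easily patched.

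The paper avoids this pitfall by a cleaner reduction. Since $H$ is a cluster tilting object, Corollary~\ref{maxcx} gives $\sup\cx^*_{\C_H}=\cx^*_{\C_H}(H,H)$, and one computes the \emph{negative} complexity: because $H$ is projective, for $n>0$ the only surviving summand in the orbit sum is $\Hom_{\D^b(H)}(H,\tau^{-n}H)\cong\tau^{-n}H$, and $\tau^{-n}H$ is always an honest preprojective module. The question thus becomes the growth of $\dim\tau^{-n}H$, handled in Proposition~\ref{taugrowth}; the wild case there uses the spectral radius $\rho>1$ exactly as you suggest, while the tame case is argued via the root-system/radical-vector description rather than the Jordan structure of $c$.
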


We also study the complexity of the derived category of a cluster
tilted algebra, and show that in this case, the maximal complexity
occurring depends on the representation type of the algebra:

\begin{unnmbrdtheorem}
If $\Lambda$ is a cluster tilted algebra of finite or tame
representation type, then
\[ \sup \{ \cx^*_{\D^b(\Lambda)}(X,Y) \mid X,Y \in \D^b(\Lambda) \} = \left \{
\begin{array}{ll}
0 & \text{if $\Lambda$ is hereditary,} \\
1 & \text{otherwise}
\end{array}
\right. \]
\end{unnmbrdtheorem}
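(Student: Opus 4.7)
The plan is to treat the hereditary and non-hereditary cases separately. If $\Lambda$ is hereditary, then $\gldim \Lambda \le 1$ and every object of $\D^b(\Lambda)$ decomposes as a finite direct sum $\bigoplus_i \H^i(X)[-i]$ of shifted modules. Since $\Ext^n_\Lambda(M,N)=0$ for $n \notin \{0,1\}$, it follows that $\Hom_{\D^b(\Lambda)}(X,Y[n])=0$ for $|n|$ sufficiently large, so the complexity is $0$ for every pair.

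Now suppose $\Lambda$ is cluster tilted of finite or tame type but not hereditary. By Keller--Reiten, $\Lambda$ is $1$-Iwanaga--Gorenstein, so $\Omega M$ is Gorenstein projective for every module $M$; and since $\Lambda$ is not hereditary, $\gldim\Lambda=\infty$. When $\Lambda$ has finite representation type, $\mod\Lambda$ has only finitely many indecomposables, so $\dim \Ext^n_\Lambda(M,N)$ is uniformly bounded, immediately giving $\cx^*\le 1$. In the tame case the argument will be more delicate: I would reduce from arbitrary objects of $\D^b(\Lambda)$ to pairs of modules via the cohomology filtration, use that $\Ext^n_\Lambda(M,N) \cong \underline{\Hom}_{\stabCM(\Lambda)}(\Omega^n M,N)$ for $n$ large, and then transfer the computation to the cluster category $\C_H$ via the functor $\Hom_{\C_H}(T,-)$, where $T$ is a cluster tilting object with $\End_{\C_H}(T)=\Lambda$. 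The first theorem of the paper then provides the needed bound on Hom-growth.

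For the lower bound, since $\gldim\Lambda=\infty$ there is a simple module $S$ with $\pd S=\infty$. In the finite type case $\stabCM(\Lambda)$ has only finitely many indecomposables, so the $\Omega$-orbit of $S$ is eventually periodic; this forces $\dim \Ext^n_\Lambda(S,N)$ for a suitably chosen test module $N$ (for instance $N=\Omega^k S$ for some $k$) to be bounded above and nonzero for infinitely many $n$, giving $\cx^*\ge 1$. A parallel periodicity argument, based on the existence of suitable (eventually) $\Omega$-periodic Gorenstein projectives, should handle the tame case as well.

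The main obstacle is the upper bound when $\Lambda$ is of tame type: the first theorem only yields $\cx^*_{\C_H}\le 2$ for $H$ of tame type, which is one unit too large for our purposes. The extra unit comes from the linear Hom-growth within the tubes in the AR-quiver of $\C_H$, and the task is to show that the images of actual syzygies of $\Lambda$-modules under $\Hom_{\C_H}(T,-)$ do not realize this growth; equivalently, that the linear phenomenon witnessing complexity $2$ in $\C_H$ collapses to bounded behavior upon passing to $\mod\Lambda$. Making this precise, probably through a tube-by-tube analysis of how $T$ meets the regular part of $\C_H$, is where the bulk of the work will lie.
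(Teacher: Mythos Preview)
Your treatment of the hereditary case and of the finite-type case is fine, and your lower bound argument (infinite global dimension $\Rightarrow$ some simple has infinite projective dimension $\Rightarrow$ by pigeonhole on the finitely many simples some pair has $\Ext^n\neq 0$ infinitely often) also goes through without needing periodicity as you suggest.

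The genuine gap is the upper bound in the tame case. You correctly identify that passing to $\C_H$ via $\Hom_{\C_H}(T,-)$ and invoking the first theorem only yields $\cx^*\le 2$, and you leave the reduction from $2$ to $1$ as ``where the bulk of the work will lie''. That reduction is the entire content of the theorem in the tame case, and your outline does not indicate how to carry it out. A tube-by-tube analysis of how syzygies sit in $\C_H$ is not obviously easier than the problem itself: the linear growth in $\C_H$ comes from the preprojective/preinjective component, not the tubes, and what must be shown is that the relevant $\Hom$-spaces, after factoring out maps through $\tau T$, stay bounded along the $\tau$-orbit.

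The paper proceeds quite differently: it proves the stronger structural statement that $\CM(\Lambda)$ has only finitely many indecomposable objects when $\Lambda$ is tame cluster tilted. The argument works inside $\mod H$ via the Buan--Marsh--Reiten equivalence: one picks an indecomposable non-regular summand $T_i$ of $T$ ``as far right as possible'', and shows that for almost all indecomposable $H$-modules $X$ the space $\Hom_H(\tau^-T_i,X)$ modulo maps factoring through $\tau T$ is nonzero, which forces $\Ext^1_\Lambda(X,\Lambda)\neq 0$. The three regions (preprojective, homogeneous tubes, non-homogeneous tubes and preinjectives) are handled separately by elementary dimension counts. Once $\CM(\Lambda)$ is of finite type, the bound $\cx^+\le 1$ is immediate from Buchweitz's equivalence $\stabCM(\Lambda)\simeq \D^b(\Lambda)/\D^{\mathrm{perf}}(\Lambda)$, and the lower bound also falls out by periodicity. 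So rather than trying to squeeze the extra unit out of the cluster-category complexity, the paper bypasses that route entirely by proving CM-finiteness.
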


We prove this by showing that a tame cluster tilted algebra has
finitely many indecomposable Cohen-Macaulay modules. Finally, we
look at some examples showing what can happen for wild cluster
titled algebras.

\section{Preliminaries}

Throughout this section, we fix a field $k$ and a triangulated
$\Hom$-finite $k$-category $\T$ with suspension functor $\s$. Thus
for all objects $X,Y,Z$ in $\T$, the set $\Hom_{\T}(X,Y)$ is a
finite dimensional $k$-vector space, and the composition
$$\Hom_{\T}(Y,Z) \times \Hom_{\T}(X,Y) \to \Hom_{\T}(X,Z)$$
is $k$-bilinear. Recall that a \emph{Serre functor} on $\T$ is a
triangle equivalence $\T \xrightarrow{S} \T$, together with
functorial isomorphisms
$$\Hom_{\T}(X,Y) \simeq D \Hom_{\T}(Y,SX)$$
of vector spaces for all objects $X,Y \in \T$, where $D =
\Hom_k(-,k)$. By \cite{BondalKapranov}, such a functor is unique if
it exists. For an integer $d \in \mathbb{Z}$, the category $\T$ is
said to be \emph{$d$-Calabi-Yau} if it admits a Serre functor which
is isomorphic as a triangle functor to $\s^d$.

A subcategory of $\T$ is \emph{thick} if it is a full triangulated
subcategory closed under direct summands. Now let $\C$ and $\D$ be
subcategories of $\T$. We denote by $\thick^1_{\T} ( \C )$ the full
subcategory of $\T$ consisting of all the direct summands of finite
direct sums of shifts of objects in $\C$. Furthermore, we denote by
$\C \ast \D$ the full subcategory of $\T$ consisting of objects $M$
such that there exists a distinguished triangle
$$C \to M \to D \to \s C$$
in $\T$, with $C \in \C$ and $D \in \D$. Now for each $n \ge 2$,
define inductively $\thick^n_{\T} ( \C )$ to be $\thick_{\T}^1 \left
( \thick^{n-1}_{\T} ( \C ) \ast \thick^1_{\T} ( \C ) \right )$, and
denote $\bigcup_{n=1}^{\infty} \thick^n_{\T} ( \C )$ by $\thick_{\T}
( \C )$. This is the smallest thick subcategory of $\T$ containing
$\C$.

Given two objects $X$ and $Y$ of $\T$, we define the \emph{positive
complexity} of the ordered pair $(X,Y)$ as
$$\cx_{\T}^+ (X,Y) \stackrel{\text{def}}{=} \inf \{ t \in \mathbb{N}
\cup \{ 0 \} \mid \exists a \in \mathbb{R}: \dim \Hom_{\T}(X, \s^nY)
\le an^{t-1} \text{ for } n \gg 0 \}.$$ Similarly, we define the
\emph{negative complexity} as
$$\cx_{\T}^- (X,Y) \stackrel{\text{def}}{=} \inf \{ t \in \mathbb{N}
\cup \{ 0 \} \mid \exists a \in \mathbb{R}: \dim \Hom_{\T}(X,
\s^{-n}Y) \le an^{t-1} \text{ for } n \gg 0 \}.$$ Whenever we write
$\cx_{\T}^*(X,Y)$ and make a statement, it is to be understood that
the statement holds for both the positive and the negative
complexity. By definition, the positive complexity is zero if and
only if $\Hom_{\T}(X, \s^nY)=0$ for large $n$, whereas the negative
complexity is zero if and only if $\Hom_{\T}(X, \s^nY)=0$ for small
$n$. Moreover, given integers $a,b \in \mathbb{Z}$, there is an
equality $\cx_{\T}^*(X,Y) = \cx_{\T}^*( \s^aX, \s^bY)$. Note also
that if $\T$ is $d$-Calabi-Yau for some $d$, then $\cx_{\T}^+(X,Y) =
\cx_{\T}^-(Y,X)$; in particular the equality $\cx_{\T}^+(X,X) =
\cx_{\T}^-(X,X)$ holds in this case.

The following elementary lemma shows that complexity in some sense
behaves nicely on thick subcategories.

\begin{lemma}\label{thickcx}
Let $X$ and $Y$ be objects of $\T$. Then $\cx_{\T}^*(X',Y) \le
\cx_{\T}^*(X,Y)$ for all objects $X' \in \thick_{\T} ( X )$, and
$\cx_{\T}^*(X,Y') \le \cx_{\T}^*(X,Y)$ for all objects $Y' \in
\thick_{\T} ( Y )$. In particular, the inequality
$\cx_{\T}^*(X',X'') \le \cx_{\T}^*(X,X)$ holds for all objects
$X',X'' \in \thick_{\T} ( X )$.
\end{lemma}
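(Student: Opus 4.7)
The plan is to prove both inequalities by induction along the filtration $\thick_{\T}(X) = \bigcup_{n \ge 1} \thick^n_{\T}(X)$ (and symmetrically for $Y$), using that $\dim \Hom_{\T}(-, -)$ is subadditive on triangles. I will focus on the first inequality $\cx^*_{\T}(X',Y) \le \cx^*_{\T}(X,Y)$; the second is completely analogous after applying $\Hom_{\T}(X,-)$ instead of $\Hom_{\T}(-,\s^{\pm n}Y)$.

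For the base case $X' \in \thick^1_{\T}(X)$, I would combine two easy observations. First, complexity is invariant under shifts, as remarked in the preliminaries, so $\cx^*_{\T}(\s^a X, Y) = \cx^*_{\T}(X,Y)$. Second, for a finite direct sum $Z = \bigoplus_i Z_i$ one has $\dim \Hom_{\T}(Z, \s^{\pm n}Y) = \sum_i \dim \Hom_{\T}(Z_i, \s^{\pm n}Y)$, and therefore any direct summand of $Z$ has complexity (in either variable) at most $\cx^*_{\T}(Z,Y)$. Putting these together, every $X' \in \thick^1_{\T}(X)$ satisfies $\cx^*_{\T}(X',Y) \le \cx^*_{\T}(X,Y)$.

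For the inductive step, assume the inequality holds on $\thick^{n-1}_{\T}(X)$. By the base case argument applied to the thick closure, it suffices to treat objects $M \in \thick^{n-1}_{\T}(X) \ast \thick^1_{\T}(X)$, i.e.\ objects fitting in a distinguished triangle $C \to M \to D \to \s C$ with $C \in \thick^{n-1}_{\T}(X)$ and $D \in \thick^1_{\T}(X)$. Applying $\Hom_{\T}(-, \s^{\pm n}Y)$ yields a long exact sequence, from which
\[ \dim \Hom_{\T}(M, \s^{\pm n}Y) \le \dim \Hom_{\T}(C, \s^{\pm n}Y) + \dim \Hom_{\T}(D, \s^{\pm n}Y). \]
By the inductive hypothesis and the base case, each summand on the right is bounded by $a_C n^{t-1}$ and $a_D n^{t-1}$ respectively for $t = \cx^*_{\T}(X,Y)$, so their sum is bounded by $(a_C + a_D) n^{t-1}$, giving $\cx^*_{\T}(M,Y) \le t$. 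This completes the induction and the proof of the first inequality; the second is obtained by the symmetric argument with $\Hom_{\T}(X,-)$.

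The final ``in particular'' assertion is then immediate: given $X', X'' \in \thick_{\T}(X)$, the first inequality applied to the pair $(X,X)$ gives $\cx^*_{\T}(X', X) \le \cx^*_{\T}(X,X)$, and the second inequality applied to the pair $(X', X)$ gives $\cx^*_{\T}(X', X'') \le \cx^*_{\T}(X', X)$. There is no genuine obstacle here; the only point requiring minor care is keeping the base case (summands of sums of shifts) separate from the inductive step (extensions), so that the recursive definition of $\thick^n_{\T}$ is matched exactly.
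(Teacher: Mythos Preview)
Your proof is correct and follows essentially the same approach as the paper: induction along the filtration $\thick^n_{\T}(X)$, with the base case handled via shift-invariance and direct summands, and the inductive step via the dimension inequality coming from the long exact sequence of a triangle. The only cosmetic differences are that the paper explicitly disposes of the case $\cx^*_{\T}(X,Y)=\infty$ at the outset (your bound $a_C n^{t-1}$ tacitly assumes $t<\infty$), and that you spell out the final ``in particular'' chain, which the paper leaves implicit.
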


\begin{proof}
We prove only the first inequality, by induction on the number $n$
such that $X'$ belongs to $\thick^n_{\T} ( X )$. If $\cx_{\T}^*(X,Y)
= \infty$, then the inequality obviously holds. Hence we may assume
that $\cx_{\T}^*(X,Y)$ is finite, say $\cx_{\T}^*(X,Y)=c$. If $n=1$,
then $X'$ is a direct summand of finite direct sums of shifts of
$X$, hence the inequality holds in this case. Next, suppose $n>1$
and that $X'$ belongs to $\thick^{n-1}_{\T} (X) \ast \thick^1_{\T}
(X)$. Then there exists a triangle
$$X_1 \to X' \to X_2 \to \s X_1$$
in which $X_1 \in \thick^{n-1}_{\T} (X)$ and $X_2 \in \thick^1_{\T}
(X)$. This triangle induces an exact sequence
$$\Hom_{\T}(X_2, \s^nY) \to \Hom_{\T}(X', \s^nY) \to \Hom_{\T}(X_1,
\s^nY)$$ of vector spaces for every $n \in \mathbb{Z}$. By
induction, both $\cx_{\T}^*(X_1,Y)$ and $\cx_{\T}^*(X_2,Y)$ are at
most $c$. Therefore, there exist real numbers $a_1$ and $a_2$ such
that
\begin{eqnarray*}
\dim \Hom_{\T}(X_1, \s^nY) & \le & a_1|n|^{c-1} \\
\dim \Hom_{\T}(X_2, \s^nY) & \le & a_2|n|^{c-1}
\end{eqnarray*}
for $|n| \gg 0$. This gives
\begin{eqnarray*}
\dim \Hom_{\T}(X', \s^nY) & \le & \dim \Hom_{\T}(X_1, \s^nY) + \dim
\Hom_{\T}(X_2, \s^nY) \\
& \le & (a_1+a_2)|n|^{c-1}
\end{eqnarray*}
for $|n| \gg 0$, showing that $\cx_{\T}^*(X',Y)$ is at most $c$. The
result now follows from the definition of $\thick^{n}_{\T} (X)$.
\end{proof}

The aim of this paper is to determine the maximal complexity
occurring in certain triangulated categories, via maximal
orthogonal subcategories. Recall that a subcategory $\C$ of $\T$
is \emph{contravariantly finite} in $\T$ if every object in $\T$
admits a right $\C$-approximation. Thus, for every object $X \in
\T$ there exists a morphism $C \to X$ with $C \in \C$, such that
every morphism $C' \to X$ with $C' \in \C$ factors through $C$.
The following lemma provides a criterion under which a
contravariantly finite subcategory $\C$ generates $\T$ (see
\cite{Iyama1} and \cite[Section 5.5]{KellerReiten}). Consequently,
we see from Lemma \ref{thickcx} that the maximal complexity of
$\T$ equals that of $\C$.

\begin{lemma}\label{generating}
Let $\C$ be a contravariantly finite subcategory of $\T$, and
suppose there exists an integer $n \ge 1$ such that the following
are equivalent for every object $X \in \T$:
\begin{enumerate}
\item $X \in \C$, \item $\Hom_{\T}(C, \s^iX)=0$ for $1 \le i \le
n$ and all $C \in \C$.
\end{enumerate}
Then $\thick_{\T}^{n+1}( \C ) = \T$.
\end{lemma}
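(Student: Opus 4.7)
The plan is to build $X$ by successively resolving it against $\C$. Setting $X_0 = X$, iteratively choose a right $\C$-approximation $C_k \to X_k$ (which exists by contravariant finiteness of $\C$) and complete to a triangle
\[ X_{k+1} \to C_k \to X_k \to \s X_{k+1}. \]

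First I would prove by induction on $k$ that for $1 \le k \le n$ and every $C \in \C$, the vanishing $\Hom_{\T}(C, \s^i X_k) = 0$ holds for $1 \le i \le k$. To pass from $k$ to $k+1$, apply the cohomological functor $\Hom_{\T}(C, -)$ to the above triangle. For $i = 1$, the $\C$-approximation property makes $\Hom_{\T}(C, C_k) \to \Hom_{\T}(C, X_k)$ surjective, while $\Hom_{\T}(C, \s C_k) = 0$ by the hypothesis applied to $C_k \in \C$; exactness then yields $\Hom_{\T}(C, \s X_{k+1}) = 0$. For $2 \le i \le k+1 \le n$, the inductive hypothesis gives $\Hom_{\T}(C, \s^{i-1} X_k) = 0$ and the hypothesis again gives $\Hom_{\T}(C, \s^i C_k) = 0$, so exactness forces $\Hom_{\T}(C, \s^i X_{k+1})$ to vanish.

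Taking $k = n$, the resulting vanishing for all $1 \le i \le n$ together with the hypothesis forces $X_n \in \C$. A downward induction on $k$ then shows that $X_k \in \thick^{n-k+1}_{\T}(\C)$ for every $0 \le k \le n$. For the step from $k$ to $k-1$, rotate the triangle to $C_{k-1} \to X_{k-1} \to \s X_k \to \s C_{k-1}$, placing $X_{k-1}$ in $\thick^1_{\T}(\C) \ast \thick^{n-k+1}_{\T}(\C)$. Since each $\thick^m_{\T}(\C)$ is closed under shifts, rotating triangles shows that $\mathcal{A} \ast \mathcal{B} = \mathcal{B} \ast \mathcal{A}$ for such subcategories, so this is contained in $\thick^{n-k+1}_{\T}(\C) \ast \thick^1_{\T}(\C) \subseteq \thick^{n-k+2}_{\T}(\C)$ by definition. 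Setting $k = 0$ finally yields $X = X_0 \in \thick^{n+1}_{\T}(\C)$, as required.

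The main obstacle is the bookkeeping in both inductions, in particular verifying that the ``$\ast$''-operation can be commuted so that the iterated triangles fit the one-sided recursive definition of $\thick^n_{\T}(\C)$, and carefully handling the boundary indices $i = 1$ and $i = k+1$ in the vanishing range so that the key step uses both the approximation property and the hypothesized vanishing $\Hom_{\T}(C, \s^i C_k) = 0$.
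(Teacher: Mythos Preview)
Your approach mirrors the paper's exactly: take iterated right $\C$-approximations, show the $n$-th ``syzygy'' $X_n$ lies in $\C$, then climb back up. Your vanishing induction is correct and more detailed than the paper's one-line ``an induction argument shows\ldots''.

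The gap is in the second induction. You assert that ``rotating triangles shows that $\mathcal{A} \ast \mathcal{B} = \mathcal{B} \ast \mathcal{A}$'' when both are closed under shifts. This is false. Given a triangle $A \to M \to B \to \s A$, every rotation with $M$ (or a shift of $M$) in the middle still has a shift of $A$ on the left and a shift of $B$ on the right; rotation never swaps the order. For a concrete counterexample, take $\T = \D^b(kA_2)$, let $\mathcal{A}$ and $\mathcal{B}$ be the closures under shift of the two simple modules $S_1$ and $S_2$, and check that the indecomposable projective--injective $P_1$ lies in $\mathcal{B} \ast \mathcal{A}$ but not in $\mathcal{A} \ast \mathcal{B}$.

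What you actually need is the weaker inclusion $\thick^1_{\T}(\C) \ast \thick^m_{\T}(\C) \subseteq \thick^{m+1}_{\T}(\C)$, and this follows from \emph{associativity} of $\ast$ (a consequence of the octahedral axiom), not commutativity. Concretely, unwind your triangles to see
\[
X \in \{C_0\} \ast \bigl(\{\s C_1\} \ast \bigl( \cdots \ast \{\s^n X_n\} \bigr)\bigr),
\]
then reassociate to the left and use $\thick^j_{\T}(\C) \ast \thick^1_{\T}(\C) \subseteq \thick^{j+1}_{\T}(\C)$ straight from the definition. The paper hides this behind ``another induction argument'', but the point is associativity, not commutativity.
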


\begin{proof}
Choose $n$ triangles
$$\xymatrix@R=0.7pc{
K_1 \ar[r] & C_0 \ar[r]^{f_0} & X \ar[r] & \s K_1 \\
\vdots & \vdots & \vdots & \vdots \\
K_{n-1} \ar[r] & C_{n-2} \ar[r]^{f_{n-2}} & K_{n-2} \ar[r] & \s K_{n-1} \\
K_n \ar[r] & C_{n-1} \ar[r]^{f_{n-1}} & K_{n-1} \ar[r] & \s K_n }$$
in which the morphisms $f_i$ are right $\C$-approximations. Let $C$
be any object in $\C$. The triangles induce exact sequences
$$\cdots \to \Hom_{\T}(C, \s^j C_i) \xrightarrow{( \s^j f_i )_*}
\Hom_{\T}(C, \s^j K_i) \to \Hom_{\T}(C, \s^{j+1} K_{i+1}) \to
\cdots$$ for $0 \le i \le n-1$ (where we have denoted $X$ by $K_0$).
An induction argument shows that $\Hom_{\T}(C, \s^i K_n)$ vanishes
for $1 \le i \le n$, hence $K_n$ belongs to $\C$. Then another
induction argument shows that $X$ belongs to $\thick_{\T}^{n+1} ( \C
)$.
\end{proof}

\begin{corollary}\label{maxcx}
Given the assumptions from the previous lemma, the equality
$$\sup \{ \cx^*_{\T}(X,Y) \mid X,Y \in \T \} = \sup \{ \cx^*_{\T}(C,C') \mid C,C' \in \C
\}$$ holds.
\end{corollary}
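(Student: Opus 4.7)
The inequality $\sup \{ \cx^*_{\T}(X,Y) \mid X,Y \in \T \} \ge \sup \{ \cx^*_{\T}(C,C') \mid C,C' \in \C \}$ is immediate, since $\C$ is a subcategory of $\T$. The content of the corollary is therefore the reverse inequality, and the plan is to reduce it to Lemma \ref{thickcx} using the generation statement from Lemma \ref{generating}.

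So fix arbitrary objects $X, Y \in \T$. The first step is to invoke Lemma \ref{generating}, which gives $X, Y \in \thick_{\T}^{n+1}(\C) \subseteq \thick_{\T}(\C)$. By inspecting the inductive definition of $\thick_{\T}^{n+1}(\C)$, one sees that only finitely many objects of $\C$ enter the construction of $X$ and $Y$; let $C_1, \dots, C_r$ be an enumeration of them, and set $C \stackrel{\text{def}}{=} C_1 \oplus \cdots \oplus C_r$. Then $C_1, \dots, C_r$, and hence also $X$ and $Y$, all lie in $\thick_{\T}(C)$.

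At this point Lemma \ref{thickcx} applies directly: the inequality $\cx^*_{\T}(X,Y) \le \cx^*_{\T}(C,C)$ holds because $X, Y \in \thick_{\T}(C)$. It remains to observe that
\[ \cx^*_{\T}(C,C) \;=\; \max_{1 \le i,j \le r} \cx^*_{\T}(C_i,C_j), \]
which follows from the decomposition $\Hom_{\T}(C,\s^n C) \simeq \bigoplus_{i,j} \Hom_{\T}(C_i, \s^n C_j)$ and the fact that a finite sum of functions of polynomial growth has as its growth rate the maximum of the individual growth rates. Combining these inequalities gives $\cx^*_{\T}(X,Y) \le \sup \{ \cx^*_{\T}(C',C'') \mid C',C'' \in \C \}$, which yields the desired supremum inequality.

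There is no real obstacle here; the only subtlety is the mild gap between Lemma \ref{thickcx}, which is phrased for a single generating object, and the present statement, which concerns a whole subcategory $\C$. This gap is bridged by the finiteness observation built into $\thick_{\T}^{n+1}(\C)$, which lets us replace $\C$ by the single object $C = \bigoplus_i C_i$ for the purposes of applying Lemma \ref{thickcx}.
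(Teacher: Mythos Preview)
Your proof is correct and follows precisely the approach the paper intends: the paper gives no explicit proof for this corollary, treating it as an immediate consequence of Lemma~\ref{generating} and Lemma~\ref{thickcx} (as announced in the paragraph preceding Lemma~\ref{generating}). You have correctly identified and cleanly bridged the one small gap left implicit in the paper, namely that Lemma~\ref{thickcx} is stated for a single generating object while $\C$ is a subcategory; your reduction via the finitely many $C_i$ appearing in the construction of $X$ and $Y$ in $\thick_{\T}^{n+1}(\C)$ is exactly the right fix.
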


In the next section, we apply the above results to Calabi-Yau
triangulated categories admitting subcategories with the
properties displayed in the assumption of Lemma \ref{generating}.
Recall therefore that, if $\T$ is $d$-Calabi-Yau for some $d \ge
2$, then a \emph{cluster tilting subcategory} of $\T$ is a
contravariantly finite subcategory $\C$ such that the following
are equivalent for any object $X \in \T$:
\begin{enumerate}
\item $X \in \C$, \item $\Hom_{\T}(C, \s^iX)=0$ for $1 \le i \le
d-1$ and all $C \in \C$.
\end{enumerate}
Since $\T$ is $d$-Calabi-Yau, property (2) is equivalent to
\begin{enumerate}
\item[(3)] $\Hom_{\T}( X, \s^iC)=0$ for $1 \le i \le
d-1$ and all $C \in \C$.
\end{enumerate}
An object $T \in \T$ is a \emph{cluster tilting object} of $\T$ if
$\add T$ is a cluster tilting subcategory.

Note that it follows directly from Corollary \ref{maxcx} that if
$\C_1$ and $\C_2$ are cluster tilting subcategories of $\T$, then
\begin{eqnarray*}
\sup \{ \cx^*_{\T}(X,Y) \mid X,Y \in \T \} & = & \sup \{
\cx^*_{\T}(C_1,C_1') \mid C_1,C_1' \in \C_1 \} \\
& = & \sup \{ \cx^*_{\T}(C_2,C_2') \mid C_2,C_2' \in \C_2 \}.
\end{eqnarray*}
Therefore, in order to determine the maximal complexity of a
Calabi-Yau triangulated category, any cluster tilting subcategory
will do.

\section{Cluster categories}

Cluster categories associated to finite dimensional hereditary
algebras were introduced in \cite{BMRRT} (and for hereditary
algebras of Dynkin type $A_n$ in \cite{CalderoChapotonSchiffler}).
Let $k$ be an algebraically closed field and $H$ a basic finite
dimensional hereditary $k$-algebra. Let $\D^b( H)$ be the bounded
derived category of finitely generated left $H$-modules; this
category is triangulated, its suspension functor $\s$ is just the
shift of a complex. Finally, denote by $\tau$ the Auslander-Reiten
translate in $\D^b(H)$; this functor is induced by the usual
Auslander-Reiten translate $D \Tr$ on the non-projective
indecomposable $H$-modules. It was shown in \cite{Keller} that the
orbit category $\D^b(H) / \tau^{-1} \s$ is triangulated, with
suspension functor induced by $\s$. This is the \emph{cluster
category} $\C_H$ associated to $H$. Its objects coincide with the
objects in $\D^b(H)$, and the functors $\s$ and $\tau$ are equal.
Given objects $X$ and $Y$ of $\C_H$, the morphism space
$\Hom_{\C_H}(X,Y)$ is given by
$$\Hom_{\C_H}(X,Y) \stackrel{\text{def}}{=} \bigoplus_{i \in
\mathbb{Z}} \Hom_{\D^b(H)}( \tau^{-i} \s^i X,Y ),$$ which is finite
dimensional since $H$ is hereditary. We shall denote the suspension
functor of $\C_H$ by $\s$ as well. Moreover, given an $H$-module
$M$, we shall also denote its image in $\C_H$ by $M$. By
\cite[Proposition 1.7(b)]{BMRRT} the cluster category $\C_H$ is
$2$-Calabi-Yau, that is, there is an isomorphism
$$D \Hom_{\C_H}(X, \s Y) \simeq \Hom_{\C_H}(Y, \s X)$$
of vector spaces for all objects $X$ and $Y$ in $\C_H$.

In order to prove the main result, we need a result on the rate of
growth of the sequence $\{ \dim \tau^{-n}H \}_{n=1}^{\infty}$ for
a hereditary algebra $H$. Recall first that the representation
type of a finite dimensional algebra (over an algebraically closed
field) is either \emph{finite, tame} or \emph{wild}. An algebra is
of finite representation type if there are only finitely many
non-isomorphic indecomposable modules. Furthermore, an algebra is
of tame representation type if there exist infinitely many
non-isomorphic indecomposable modules, but they all belong to
one-parameter families, and in each dimension there are finitely
many such families. Finally, an algebra is of wild representation
type if it is not of finite or tame type. In the latter case, the
representation theory of the algebra is at least as complicated as
the classification of finite dimensional vector spaces together
with two non-commuting endomorphisms.

\begin{proposition}\label{taugrowth}
Let $H$ be a finite dimensional hereditary algebra of infinite
representation type over an algebraically closed field. Define
$$\gamma \left ( \tau^{-1}_H \right ) := \inf \{ t \in \mathbb{N}
\cup \{ 0 \} \mid \exists a \in \mathbb{R}: \dim \tau^{-n}H \le
an^{t-1} \text{ for } n \gg 0 \}.$$ Then the following hold:
\begin{enumerate}
\item $\gamma \left ( \tau^{-1}_H \right ) =2$ if \emph{(}and only
if\emph{)} $H$ is tame. \item $\gamma \left ( \tau^{-1}_H \right )
= \infty$ if \emph{(}and only if\emph{)} $H$ is wild.
\end{enumerate}
\end{proposition}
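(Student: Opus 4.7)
The plan is to reduce the question to the spectral behaviour of the Coxeter transformation on $K_0(H)$. Since $H$ is of infinite representation type, every indecomposable summand of $\tau^{-n}H$ lies in the preprojective component of the Auslander--Reiten quiver, hence is non-injective, so $\tau^{-n}H$ is defined for all $n \geq 0$ and satisfies $\underline{\dim}\,\tau^{-n}H = \Phi^{-n}\underline{\dim}\,H$, where $\Phi$ is the Coxeter matrix of $H$. Assuming $H$ basic over an algebraically closed field, $\dim_k \tau^{-n}H$ is just the sum of the (non-negative) entries of $\Phi^{-n}\underline{\dim}\,H$, so the growth rate of $\dim_k \tau^{-n}H$ coincides with that of $\|\Phi^{-n}\underline{\dim}\,H\|$ in any norm. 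One may harmlessly assume $H$ connected, since a product of hereditary algebras is tame iff each factor is at most tame, and is wild iff some factor is wild.

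For the tame case, I would appeal to the classical description (Dlab--Ringel) of $\Phi$ for a connected tame hereditary algebra: all eigenvalues of $\Phi$ lie on the unit circle, the eigenvalue $1$ occurs in exactly one Jordan block of size $2$ (whose generalized eigenvector is the null root, i.e.\ the defect direction), and every other Jordan block is diagonal with eigenvalue a root of unity. It follows that $\|\Phi^{-n}\|$ grows linearly in $n$, which yields $\dim_k \tau^{-n}H \leq a n$ for some constant $a$ and hence $\gamma(\tau^{-1}_H) \leq 2$. The lower bound $\gamma(\tau^{-1}_H) \geq 2$ comes from the fact that the modules $\{\tau^{-n}P_i\}_{n\geq 0}$ are pairwise non-isomorphic indecomposable preprojectives; since only finitely many indecomposables have any given dimension, $\dim_k \tau^{-n}H$ is unbounded, ruling out $\gamma \leq 1$.

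For the wild case, I would use the Perron--Frobenius-type theorem (Ringel, A'Campo) that for a connected wild hereditary algebra the matrix $\Phi^{-1}$ has a unique eigenvalue $\rho$ of largest modulus, with $\rho > 1$ real and admitting a strictly positive eigenvector $v^{+}$. Expanding $\underline{\dim}\,H = c\, v^{+} + (\text{lower-growth terms})$ in a Jordan basis of $\Phi^{-1}$, one concludes that $\|\Phi^{-n}\underline{\dim}\,H\|$ grows like $\rho^{n}$, which exceeds every polynomial in $n$; hence $\gamma(\tau^{-1}_H) = \infty$.

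The main technical obstacle is verifying that the coefficient $c$ of $v^{+}$ above is nonzero, so as to obtain honest exponential growth and not merely an exponential upper bound. This is handled by pairing $\underline{\dim}\,H$ with a strictly positive \emph{left} Perron eigenvector of $\Phi^{-1}$: both vectors have strictly positive entries, so their pairing is positive, forcing $c \neq 0$. The remaining ingredients — the detailed spectral statements for the Coxeter matrix in the tame and wild cases — are entirely classical and can simply be quoted from the work of Dlab--Ringel and Ringel on tame and wild hereditary algebras.
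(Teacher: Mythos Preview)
Your proposal is correct and closely parallel to the paper's argument in the wild case, but takes a genuinely different route in the tame case.

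For the tame case, the paper argues combinatorially via quadratic forms and roots: it fixes the minimal positive radical vector $r$, shows that only finitely many positive roots are not $\geq r$, and deduces that the sequence $[\tau^{-n}P]$ is eventually an arithmetic progression with step a multiple of $r$; linear growth of $\dim \tau^{-n}H$ falls out. You instead invoke the Jordan structure of the Coxeter transformation (eigenvalues on the unit circle, a single $2\times 2$ block at $1$) to read off linear growth of $\|\Phi^{-n}\|$. Both arguments encode the same phenomenon --- the null root is the nilpotent direction of $\Phi - I$ --- but the paper's version is more self-contained, needing only the list of radical vectors from \cite{Ringel1}, whereas yours imports the full spectral description from Dlab--Ringel. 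Your lower bound $\gamma \geq 2$ is also different: the paper gets it implicitly from the arithmetic progression, while you argue by finiteness of preprojectives of bounded dimension. (One small imprecision: ``only finitely many indecomposables have any given dimension'' is false for tame $H$ in general, but true for preprojectives, which is what you actually use.)

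For the wild case the two arguments are essentially the same: both rest on $\rho > 1$ from \cite{Ringel2}. The paper cites Takane's theorem as a black box to guarantee that $\dim \tau^{-n}P$ genuinely grows like $\rho^n n^{m-1}$ rather than being killed by cancellation; you instead prove the nonvanishing of the Perron coefficient directly by pairing with a strictly positive left eigenvector. Your version is more hands-on and avoids citing \cite{Takane}, at the cost of reproving part of what Takane's result packages.
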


\begin{proof}
($1$) Suppose $H$ is tame. We may assume that $H$ is the path
algebra of one of the Euclidean quivers $\widetilde{A}_n,
\widetilde{D}_n, \widetilde{E}_6, \widetilde{E}_7,
\widetilde{E}_8$. We prove this case by using the theory of
quadratic forms, and refer to \cite[Chapter 1]{Ringel1} for
unexplained notation and terminology. Let $n$ be the number of
vertices of the underlying Euclidean quiver.

Let $r \in \mathbb{Z}^n$ be a minimal positive radical vector. We
claim that the set
$$\{ x \in \mathbb{Z}^n \mid x \text{ root}, 0 \le x, r \nleq x
\}$$ is finite. To see this, note that there exists an integer $1
\le i \le n$ such that the $i$th entry in $r$ is $1$ (see
\cite[table on page 8]{Ringel1}). This implies that $\{ r  \} \cup
\{ e_j \mid 1 \le j \le n,j \neq i \}$ is a $\mathbb{Z}$-basis for
$\mathbb{Z}^n$, where $e_j$ denotes the $j$th unit vector. With
respect to this basis, the quadratic form is given by a matrix
$$\left ( \begin{array}{cc}
A & 0 \\
0 & 0
\end{array} \right )$$
in which $A$ is the matrix of the quadratic form of the
corresponding Dynkin quiver. Thus all the roots are of the form $y
+ \alpha e_n$, where $\alpha \in \mathbb{Z}$ and $y$ is a root of
$A$. But then all the roots are also of the form $y + \alpha r$,
and since there are only finitely many choices for $y$, the claim
follows.

Now let $P$ be an indecomposable projective $H$-module. Then
$\tau^{-m}P$ is indecomposable for all $m \ge 0$, and the roots
$\{ [ \tau^{-m}P ] \}_{m=0}^{\infty}$ are pairwise distinct. By
the above, there exist integers $i < j$ and $v$ such that
$[\tau^{-i}P] + vr = [\tau^{-j}P]$. Denoting $j-i$ by $l$, we see
that
$$[\tau^{-(\alpha l +m)}P] = \alpha v r + [\tau^{-m}P]$$
for any $\alpha \ge 0$ and $m \in \{ 0, \dots, l-1 \}$. This shows
that the sequence $\{ \dim \tau^{-n}H \}_{n=1}^{\infty}$ grows
linearly.

(2) Suppose $H$ is wild, and let $P$ be an indecomposable
$H$-module. By \cite[Theorem 2.4]{Takane}, there exists an integer
$m$ such that
$$\lim_{n \to \infty} \frac{\dim \tau^{-n}P}{\rho^n n^{m-1}}$$
is nonzero, where $\rho$ is the spectral radius of the Coxeter
transformation of $H$. Now suppose that $\gamma \left (
\tau^{-1}_H \right )$ is finite, so that there exist a $t \ge 0$
and an $a \in \mathbb{R}$ such that $\dim \tau^{-n}P \le an^{t-1}$
for large $n$. Then
\begin{eqnarray*}
\lim_{n \to \infty} \frac{\dim \tau^{-n}P}{\rho^n n^{m-1}} & \le &
\lim_{n \to \infty} \frac{an^{t-1}}{\rho^n n^{m-1}} \\
& = & \lim_{n \to \infty} \frac{an^{t-m}}{\rho^n} \\
& = & 0
\end{eqnarray*}
since, by \cite[Theorem]{Ringel2}, the spectral radius $\rho$
satisfies $\rho > 1$. This is a contradiction, hence $\gamma \left
( \tau^{-1}_H \right ) = \infty$.
\end{proof}

We now prove the main result. It shows that the maximal complexity
in $\C_H$ (positive and negative) is either one, two or infinite,
depending on the representation type of $H$.

\begin{theorem}\label{mainclustercategory}
Let $H$ be a basic finite dimensional hereditary algebra over an
algebraically closed field, and let $\C_H$ be the corresponding
cluster category. Then
$$\sup \{ \cx^*_{\C_H}(X,Y) \mid X,Y \in \C_H \} = \left \{
\begin{array}{ll}
1 & \text{if $H$ has finite type,} \\
2 & \text{if $H$ has tame type,} \\
\infty & \text{if $H$ has wild type.}
\end{array}
\right.$$
\end{theorem}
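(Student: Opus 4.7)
The plan proceeds in four stages.

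First, $H$ regarded as an object of $\C_H$ is a cluster tilting object, so $\add H$ is a cluster tilting subcategory of $\C_H$. Corollary \ref{maxcx} together with Lemma \ref{thickcx} then gives
$$\sup\{\cx^*_{\C_H}(X,Y) \mid X,Y \in \C_H\} = \cx^*_{\C_H}(H,H),$$
and the $2$-Calabi-Yau property gives $\cx^+_{\C_H}(H,H) = \cx^-_{\C_H}(H,H)$. Hence it suffices to compute $\cx^+_{\C_H}(H,H)$.

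Second, I would unfold the orbit category formula. Since $\tau$ and $\s$ commute as autoequivalences of $\D^b(H)$,
$$\Hom_{\C_H}(H, \s^n H) = \bigoplus_{i \in \mathbb{Z}} \Hom_{\D^b(H)}(\tau^{-i}H, \s^{n-i}H),$$
and hereditariness of $H$ forces $\Hom_{\D^b(H)}(M, \s^j N) = \Ext^j_H(M,N)$ to vanish unless $j \in \{0,1\}$. Assuming $H$ of infinite representation type, for $n \gg 0$ only the summands $i = n$ and $i = n-1$ survive: the terms with $i \geq 1$ involve preprojective non-projective modules $\tau^{-i}H$, while the terms with $i \leq -1$ involve objects of the form $\tau^{-i}\s^i H$, which identify via the Nakayama functor with shifts $\s^{2i}(\nu H)$ of an injective and whose contributions to $\Hom_{\D^b(H)}(-, \s^n H)$ are higher $\Ext$ groups that vanish once $n + 2i \geq 2$. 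This produces
$$\dim \Hom_{\C_H}(H, \s^n H) = \dim \Hom_H(\tau^{-n}H, H) + \dim \Ext^1_H(\tau^{-n+1}H, H) \qquad (n \gg 0).$$

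Third, Serre duality on $\D^b(H)$ (whose Serre functor is $\tau\s$) yields the Auslander--Reiten identity $\dim \Ext^1_H(\tau^{-n+1}H, H) = \dim_k \Hom_H(H, \tau^{-n+2}H) = \dim_k \tau^{-n+2}H$ for $n \geq 2$, while the trivial bound $\dim \Hom_H(M,H) \leq \dim_k H \cdot \dim_k M$ handles the other summand. These estimates sandwich
$$\dim_k \tau^{-n+2}H \leq \dim \Hom_{\C_H}(H, \s^n H) \leq \dim_k H \cdot \dim_k \tau^{-n}H + \dim_k \tau^{-n+2}H,$$
from which $\cx^+_{\C_H}(H,H) = \gamma(\tau^{-1}_H)$ follows directly. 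Proposition \ref{taugrowth} then delivers the values $2$ in the tame case and $\infty$ in the wild case.

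Finally, in finite representation type the category $\C_H$ has only finitely many indecomposables up to isomorphism, so the sequence $\{\dim \Hom_{\C_H}(H, \s^n H)\}_n$ is bounded, forcing $\cx^*_{\C_H}(H,H) \leq 1$. The lower bound $\cx^*_{\C_H}(H,H) \geq 1$ follows from $0 \neq \End_H(H) \subseteq \End_{\C_H}(H)$ together with the periodicity of $\s$ on indecomposables, which makes $\Hom_{\C_H}(H, \s^n H)$ nonzero for infinitely many arbitrarily large $n$. The step I expect to be most delicate is phase two: verifying that the summands with $i \leq -1$ really vanish for $n \gg 0$ requires a concrete identification of $\tau^{-i}\s^i H$ in $\D^b(H)$ via the Nakayama functor, together with care about any projective-injective indecomposable summands of $H$ that may appear in small-rank examples.
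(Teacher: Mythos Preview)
Your strategy is essentially the paper's: reduce to $\cx^*_{\C_H}(H,H)$ via the cluster tilting object $H$ and Corollary~\ref{maxcx}, unfold the orbit-category $\Hom$, and identify the growth rate with $\gamma(\tau^{-1}_H)$ so that Proposition~\ref{taugrowth} finishes the job. The one substantive difference is that the paper computes the \emph{negative} complexity. Because $\s=\tau$ in $\C_H$, one has $\Hom_{\C_H}(H,\s^{-n}H)=\Hom_{\C_H}(H,\tau^{-n}H)$, and after unfolding, a simple degree count leaves exactly one surviving summand $\Hom_{\D^b(H)}(H,\tau^{-n}H)\simeq\tau^{-n}H$. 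Thus the paper lands directly on $\dim_k\tau^{-n}H$ with no Serre-duality step and no sandwich estimate; your route via $\cx^+$ is correct but picks up two summands and the extra bookkeeping you describe.

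One small correction to your phase-two sketch: for $i\le -1$ the object $\tau^{-i}\s^i H$ is \emph{not} $\s^{2i}(\nu H)$. Writing $m=-i\ge 1$, only the first application of $\tau=\nu\s^{-1}$ to the projective $H$ produces an extra shift; since $\nu H=DH$ is injective (hence not projective when $H$ is connected of infinite type), the remaining $m-1$ applications of $\tau$ act as the ordinary module-level AR translate. Concretely $\tau^m\s^{-m}H$ is a stalk complex in degree $-m-1$, so the corresponding summand is $\Ext_H^{\,n+m+1}(\text{preinjective},H)$, which vanishes for every $n\ge 0$ and $m\ge 1$. In other words the ``delicate'' case you flagged is actually easier than you anticipated, and requires no ``$n\gg 0$'' hypothesis at all. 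With this fix your argument goes through unchanged.
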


\begin{proof}
Consider the subcategory $\add H$ of $\C_H$. It is contravariantly
finite since it contains only finitely many non-isomorphic
indecomposable objects. Moreover, by \cite[Theorem 3.3(b)]{BMRRT},
the following are equivalent for any object $X \in \T$:
\begin{enumerate}
\item $X \in \add H$, \item $\Hom_{\T}(H, \s X)=0$.
\end{enumerate}
Thus the object $H$ is cluster tilting in $\C_H$, and so from
Corollary \ref{maxcx} we see that
$$\sup \{ \cx^*_{\C_H}(X,Y) \mid X,Y \in \C_H \} =
\cx_{\C_H}^*(H,H).$$ Since $\C_H$ is Calabi-Yau, the maximal
positive complexity equals the maximal negative complexity. It
therefore suffices to prove the result for negative complexity.

By definition, the negative complexity $\cx_{\C_H}^-(H,H)$ equals
the rate of growth of the dimensions of the vector spaces
$\Hom_{\C_H}(H, \s^{-n} H)$ as $n$ grows. Since $\tau = \s$ on
$\C_H$, we obtain isomorphisms
\begin{eqnarray*}
\Hom_{\C_H}(H, \s^{-n} H) & \simeq & \Hom_{\C_H}(H, \tau^{-n} H)
\\
& \simeq & \bigoplus_{i \in \mathbb{Z}} \Hom_{\D^b(H)}( \tau^{-i}
\s^i H, \tau^{-n} H ) \\
& \simeq & \bigoplus_{i \in \mathbb{Z}} \Hom_{\D^b(H)}(H,
\tau^{i-n} \s^{-i} H)
\end{eqnarray*}
of vector spaces. If $H$ is of finite representation type, then
$\dim \Hom_{\C_H}(H, \tau^{-n} H)$ is bounded as $n \to \infty$.
Hence the result follows in this case.

Suppose $H$ is of infinite representation type. Given integers $i$
and $j$, the stalk complex $\tau^i \s^j H$ in $\D^b(H)$ is nonzero
in degree $j-1$ when $i \ge 1$, and in degree $j$ when $i \le 0$.
Thus when $n$ is positive, the only nonzero term in the above
direct sum appears when $i=0$, that is, the term $\Hom_{\D^b(H)}(H,
\tau^{-n} H)$. Therefore, for such $n$, we obtain the isomorphisms
\begin{eqnarray*}
\Hom_{\C_H}(H, \s^{-n} H) & \simeq & \Hom_{\D^b(H)}(H, \tau^{-n} H)
\\
& \simeq & \Hom_H(H, \tau^{-n}H) \\
& \simeq & \tau^{-n}H.
\end{eqnarray*}
Consequently, the negative complexity $\cx_{\C_H}^-(H,H)$ equals
the rate of growth of the sequence $\{ \dim \tau^{-n}H
\}_{n=1}^{\infty}$. The result now follows from Proposition
\ref{taugrowth}.
\end{proof}

\section{Cluster tilted algebras}

Let $H$ be a basic finite dimensional hereditary algebra over some
algebraically closed field, and $T$ a cluster tilting object in the
cluster category $\C_H$. The corresponding \emph{cluster tilted
algebra} is the endomorphism ring $\End_{\C_H}(T)$, itself a finite
dimensional algebra. By \cite{BMR}, the functor
\[ \Hom(T, -) \colon \mathcal{C}_H / (\tau T) \to \mod \left ( \End_{\C_H}(T) \right ) \]
is an equivalence, hence one might suspect from
Theorem~\ref{mainclustercategory} that tame cluster tilted algebras
have complexity two. However, this is \emph{not} the case: we show
in this section that their complexity is at most one.

In order to show this, we first recall some facts on Gorenstein
algebras. Let $\Gamma$ be such an algebra, and denote by $\CM (
\Gamma )$ the category of Cohen-Macaulay $\Gamma$-modules, i.e.\
$$\CM ( \Gamma ) = \{ M \in \mod \Gamma \mid \Ext_{\Gamma}^i(M, \Gamma )=0
\text{ for all } i >0 \}.$$ It follows from general cotilting
theory that this is a Frobenius exact category, in which the
projective injective objects are the projective $\Gamma$-modules,
and the injective envelopes are the left $\add
\Gamma$-approximations. Therefore the stable category
$\underline{\CM} ( \Gamma )$, which is obtained by factoring out
all morphisms which factor through projective $\Gamma$-modules, is
a triangulated category. Its shift functor is given by cokernels
of left $\add \Gamma$-approximations, the inverse shift is the
usual syzygy functor. Now let $\D^b ( \Gamma )$ be the bounded
derived category of finitely generated $\Gamma$-modules.
Furthermore, let $\D^{\text{perf}} ( \Gamma )$ be the thick
subcategory of $\D^b ( \Gamma )$ consisting of objects isomorphic
to bounded complexes of finitely generated projective
$\Gamma$-modules. It follows from work by Buchweitz, Happel and
Rickard (cf.\ \cite{Buchweitz}, \cite{Happel}, \cite{Rickard})
that $\underline{\CM} ( \Gamma )$ and the quotient category $\D^b
( \Gamma ) / \D^{\text{perf}} ( \Gamma )$ are equivalent as
triangulated categories.

The following lemma shows that if $\CM ( \Gamma )$ is of finite
type, i.e.\ contains only finitely many non-isomorphic
indecomposable objects, then the maximal complexity occurring in
$\D^b ( \Gamma )$ is either one or zero.

\begin{lemma}\label{gorenstein}
Let $\Gamma$ be a finite dimensional Gorenstein algebra such that
the category $\CM ( \Gamma )$ of Cohen-Macaulay $\Gamma$-modules
has finitely many non-isomorphic indecomposable objects. Then
\[ \sup \{ \cx^+_{\D^b(\Gamma)}(X,Y) \mid X,Y \in \D^b(\Gamma) \} = \left \{
\begin{array}{ll}
0 & \text{if $\Gamma$ has finite global dimension,} \\
1 & \text{otherwise}
\end{array}
\right. \]
\end{lemma}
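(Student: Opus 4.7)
My plan is to split on whether $\Gamma$ has finite global dimension. If $\gldim \Gamma < \infty$, then $\D^{\perf}(\Gamma) = \D^b(\Gamma)$, so $\Hom_{\D^b(\Gamma)}(X,\s^n Y) = \Ext^n_\Gamma(X,Y) = 0$ for $n \gg 0$, and every complexity is zero. The interesting case is $\gldim \Gamma = \infty$, where I would show the supremum equals $1$ by establishing both $\leq 1$ and $\geq 1$.

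For the upper bound, I would first reduce from $\D^b(\Gamma)$ to CM modules. Let $d$ be the (finite) injective dimension of the regular module $\Gamma_\Gamma$. For any $\Gamma$-module $M$, the $d$-th syzygy $\Omega^d M$ is Cohen-Macaulay, and the short exact sequences $0 \to \Omega^{i+1} M \to P_i \to \Omega^i M \to 0$ (with $P_i$ projective, hence CM) yield triangles in $\D^b(\Gamma)$ showing $M \in \thick_{\D^b(\Gamma)}(\CM(\Gamma))$. Since every bounded complex sits in the thick closure of its cohomologies, $\D^b(\Gamma) = \thick_{\D^b(\Gamma)}(\CM(\Gamma))$. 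Rerunning the induction behind Lemma \ref{thickcx} with a generating subcategory in place of a single generator (harmless since CM type is finite) then reduces matters to proving $\cx^+_{\D^b(\Gamma)}(C,C') \leq 1$ for CM modules $C, C'$. For such a pair and $n \geq 1$, the standard identity $\Ext^n_\Gamma(C,C') \cong \underline{\Hom}_\Gamma(\Omega^n C, C')$ applies, and $\Omega^n C$ is again CM. Because the shift of the triangulated category $\underline{\CM}(\Gamma)$ is inverse to $\Omega$, one has $\Omega^n C \cong \s^{-n} C$ in $\underline{\CM}(\Gamma)$; finite CM type then makes $\{\s^{-n} C\}_{n \geq 0}$ visit only finitely many isomorphism classes, so $\dim \Ext^n_\Gamma(C,C')$ is uniformly bounded in $n$.

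For the lower bound, infinite global dimension produces a module $M$ with $\pd_\Gamma M = \infty$, and then $C := \Omega^d M$ is a non-projective CM module, hence nonzero in $\underline{\CM}(\Gamma)$. Finite CM type forces the $\s$-orbit of $C$ to be periodic, say $\s^p C \cong C$ for some $p \geq 1$, and then
\[ \Ext^{np}_\Gamma(C,C) \cong \underline{\Hom}(\s^{-np} C, C) \cong \underline{\End}(C) \neq 0 \]
for every $n \geq 1$, so $\cx^+_{\D^b(\Gamma)}(C,C) \geq 1$. The main hurdle I expect is setting up the reduction to CM modules cleanly: one must identify $\Omega$ with $\s^{-1}$ on $\underline{\CM}(\Gamma)$, justify the identification of $\Ext$ with stable $\Hom$ in the Frobenius category, and verify that the single-generator Lemma \ref{thickcx} extends verbatim to a thick-generating subcategory with several indecomposables. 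Once these structural pieces are in place, finiteness of CM type closes the argument via a straightforward counting of isomorphism classes.
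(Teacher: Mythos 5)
Your argument is correct, and it reaches the same destination as the paper's proof --- reduction to the stable category $\stabCM(\Gamma)$, where finiteness of CM type and the fact that $\s$ permutes the finitely many indecomposables bound all Hom spaces --- but by a noticeably more self-contained route. The paper simply invokes Buchweitz's equivalence $\stabCM(\Gamma) \simeq \D^b(\Gamma)/\D^{\perf}(\Gamma)$ together with his Corollary~6.3.4, which identifies $\cx^+_{\D^b(\Gamma)}(X,Y)$ with $\cx^+_{\stabCM(\Gamma)}(\overline{X},\overline{Y})$ for arbitrary complexes in one stroke; you instead prove the generation statement $\D^b(\Gamma)=\thick_{\D^b(\Gamma)}(\CM(\Gamma))$ by hand (truncations plus the syzygy triangles, using that $\Omega^d M$ is Cohen--Macaulay for $d=\operatorname{injdim}\Gamma$), feed it into Lemma~\ref{thickcx} (legitimately collapsed to the single generator $\bigoplus C_i$ over the finitely many indecomposables), and then compute $\Ext^n_\Gamma(C,C')\cong\underline{\Hom}_\Gamma(\Omega^nC,C')$ directly for CM modules --- an identification that does hold for all $n\ge 1$ here because $\Ext^1(\Omega^{n-1}C,P)=0$ for $P$ projective when $C$ is CM. What your version buys is independence from the specific Buchweitz corollary and, more importantly, an explicit treatment of the lower bound: the paper's proof only establishes ``at most one'' and leaves implicit why the supremum is exactly $1$ when $\gldim\Gamma=\infty$, whereas your periodicity argument ($\s^p\overline{C}\cong\overline{C}$, so $\underline{\End}(\overline{C})\neq 0$ recurs in infinitely many degrees) supplies this. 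What it costs is length and a couple of routine verifications (the extension of Lemma~\ref{thickcx} to a finite generating subcategory, the comparison of $\Ext$ with stable Hom) that the citation to Buchweitz handles wholesale.
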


\begin{proof}
Let $X$ and $Y$ be complexes in $\D^b ( \Gamma )$. As mentioned
above, the categories $\stabCM(\Gamma)$ and $\D^b ( \Gamma ) /
\D^{\text{perf}} ( \Gamma )$ are equivalent, and so there is a
dense functor $\D^b ( \Gamma ) \to \stabCM(\Gamma)$. If we denote
by $\overline{X}$ and $\overline{Y}$ the images of $X$ and $Y$ in
$\stabCM(\Gamma)$, then it follows from
\cite[Corollary~6.3.4]{Buchweitz} that $\cx^+_{\D^b(\Gamma)}(X,Y)
= \cx^+_{\stabCM(\Gamma)}(\overline{X},\overline{Y})$. Since
$\CM(\Gamma)$ is of finite type, we see that
$\cx^+_{\stabCM(\Gamma)}(\overline{X},\overline{Y})$ is at most
one.
\end{proof}

Recall from \cite{KellerReiten} that a cluster tilted algebra is
Gorenstein of dimension one. The following result shows that if
such an algebra $\La$ is tame, then $\CM ( \La )$ is of finite
type.

\begin{theorem}\label{cohenmacaulay}
If $\Lambda$ is a tame cluster tilted algebra, then the category
$\CM ( \La )$ of Cohen-Macaulay $\Lambda$-modules has finitely
many non-isomorphic indecomposable objects.
\end{theorem}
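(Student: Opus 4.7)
The strategy is to use the Buan--Marsh--Reiten equivalence $F = \Hom_{\C_H}(T, -) \colon \C_H/(\tau T) \xrightarrow{\sim} \mod \Lambda$ recalled in the opening of this section, in order to identify indecomposable $\Lambda$-modules with indecomposable objects of $\C_H$ outside $\add(\tau T)$, and to translate the Cohen--Macaulay condition into a vanishing condition inside $\C_H$ which, by the tameness of $H$, admits only finitely many solutions.

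First I would characterize $\CM(\Lambda)$ through $F$. Since $\Lambda$ is $1$-Gorenstein by \cite{KellerReiten}, an indecomposable $M \simeq FX$ lies in $\CM(\Lambda)$ precisely when $\Ext^1_{\Lambda}(FX, FT) = 0$. Combining the Keller--Reiten description of $\Ext^1$ between $F$-images with the $2$-Calabi--Yau duality on $\C_H$, this vanishing should translate into the vanishing of an appropriate quotient of $\Hom_{\C_H}(X, \s T)$ (equivalently $\Hom_{\C_H}(T, \s X)$) modulo the ideal of morphisms factoring through $\add(\tau T)$.

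Next I would invoke the Auslander--Reiten structure of $\C_H$ for tame $H$: its AR quiver consists of a transjective component of shape $\mathbb{Z}\Delta$ for the underlying Euclidean quiver $\Delta$, together with finitely many stable tubes. The cluster tilting object $T$ has finitely many indecomposable summands, distributed rigidly among these components. In the transjective component, $\tau$ acts freely and the Hom-structure mimics that of $\D^b(H)$, so the vanishing condition with respect to the finitely many transjective summands of $T$ confines $X$ to a finite interval. In each tube of rank $r$, the summands of $T$ sit in a rigid configuration constrained by the cluster tilting property, and the standard Hom-formulas in stable tubes force the set of $X$ with no essential map to $\s T$ to be finite. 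Summing the finitely many contributions from the transjective component and from each of the finitely many tubes then yields that $\CM(\Lambda)$ is of finite type.

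The main obstacle I anticipate is the detailed analysis inside the tubes: each tube contains indecomposables of unbounded quasi-length, and one must verify that beyond a bounded quasi-length the relevant Hom-vanishing necessarily fails. This will rest on tracking how repeated extensions by quasi-simples inevitably produce nonzero maps to the shifts of the summands of $T$ lying in that tube, and ultimately on the combinatorics of cluster tilting configurations inside stable tubes.
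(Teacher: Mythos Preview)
Your overall strategy---translate the Cohen--Macaulay condition through the BMR equivalence into a Hom-vanishing condition in $\C_H$ modulo $\add(\tau T)$, then analyse the AR components of $\C_H$---is in the right spirit, but there is a genuine gap. Your description of the AR quiver of $\C_H$ for tame $H$ is wrong: it consists of the transjective component together with a $\mathbb{P}^1(k)$-family of stable tubes, \emph{not} finitely many. All but finitely many of these tubes are homogeneous (rank one) and contain no summand of $T$ at all. Your outlined tube argument (``the summands of $T$ sit in a rigid configuration constrained by the cluster tilting property, and the standard Hom-formulas in stable tubes force the set of $X$ with no essential map to $\Sigma T$ to be finite'') tacitly assumes $T$ meets the tube, so it says nothing about the infinitely many homogeneous tubes. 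Ruling those out is not automatic: you need to produce, for every $X$ in every homogeneous tube, a map in $\Hom_{\C_H}(T,\Sigma X)$ that does not factor through $\add(\tau T)$, and this requires an argument relating $X$ to the transjective summands of $T$.

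The paper's proof avoids this component-by-component analysis entirely. Rather than trying to characterise $\CM(\Lambda)$, it extracts a single \emph{necessary} condition: pick one transjective summand $T_i$ of $T$ lying ``as far to the right as possible'' and normalise so that $\tau^- T_i$ is preprojective in $\mod H$; then $X\in\CM(\Lambda)$ forces $\Hom_{\mod H}(\tau^- T_i,X)/(\text{maps through }\tau T)=0$. Now a uniform dimension count does the work: for $X$ preprojective the denominator is zero and $\Hom_{\mod H}(\tau^- T_i,X)$ vanishes only finitely often; for $X$ in a homogeneous tube the denominator is again zero (the regular part $\tau R$ of $\tau T$ lies in other tubes), so the quotient is nonzero; and for $X$ regular non-homogeneous or preinjective, $\dim\Hom_{\mod H}(\tau R,X)$ is uniformly bounded while $\dim\Hom_{\mod H}(\tau^- T_i,X)$ is small only finitely often. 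This single-summand reduction and dimension estimate is what your outline is missing; in particular it dispatches the homogeneous tubes in one line.
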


\begin{proof}
By definition, there is a hereditary algebra $H$ and a cluster
tilting object $T \in \mathcal{C}_H$ such that $\Lambda =
\End_{\mathcal{C}_H}(T)$. Moreover, by a theorem of Krause (cf.\
\cite[Corollary 3.4]{Krause}), the algebra $H$ is also tame.
Therefore, at least two of the indecomposable direct summands of $T$
lie in the non-regular component. Let $T_i$ by one such summand
lying ``as far to the right as possible'', that is, there is no path
in this component from $T_i$ to any other summands of $T$. We may
assume that $\tau^- T_i$ comes from a projective $H$-module. Now for
any $X \in \mod H$ we have
\begin{align*}
X \in \CM(\Lambda) & \iff \Ext^1_{\Lambda}(X, \Lambda) = 0 \\
& \iff \underline{\Hom}_{\Lambda}(\tau^- \Lambda, X) = 0 \\
& \implies \Hom_{\mathcal{C}_H} (\tau^- T_i, X) / (\text{maps factoring through } \tau T) = 0 \\
& \iff \Hom_{\mod H} (\tau^- T_i, X) / (\text{maps factoring through } \tau T) = 0
\end{align*}
For $X$ preprojective this is just $\Hom_{\mod H} (\tau^- T_i, X)$,
and this space only vanishes for finitely many $X$.

W denote by $R$ the direct sum of the regular summands of $T$. For
almost all regular and preinjective $H$-modules $X$ we have
\begin{align*}
& \Hom_{\mod H} (\tau^- T_i, X) / (\text{maps factoring through } \tau T) \\
& \qquad  = \Hom_{\mod H} (\tau^- T_i, X) / (\text{maps factoring through } \tau R).
\end{align*}
If $X$ lies in a homogeneous tube then the denominator vanishes,
and hence the space is non-zero.

For any indecomposable regular $X$ the dimension $\dim \Hom(\tau R,
X)$ is at most the number of indecomposable summands of $R$. For all
preinjective $X$ we have $\dim \Hom(\tau R, X) = \dim
\Hom(\tau^{1-\ell} R, X) = \dim \Hom(\tau R, \tau^{\ell} X)$, for
some $\ell$ only depending on $R$, and hence there is a common bound
for all the $\dim \Hom(\tau R, X)$ with $X$ preinjective. Now we
have
\begin{align*}
& \dim \Hom_{\mod H} (\tau^- T_i, X) / (\text{maps factoring through } \tau R) \\
\geqslant & \dim \Hom_{\mod H} (\tau^- T_i, X) - \dim \Hom_{\mod H}
(\tau^- T_i, \tau R) \cdot \underbrace{\dim \Hom_{\mod H} (\tau R,
X)}_{\text{bounded}}.
\end{align*}
Hence this space can only vanish if $\dim \Hom_{\mod H} (\tau^-
T_i, X)$ is sufficiently small. However this only happens for
finitely many modules which are preinjective or lie in
non-homogeneous regular tubes.
\end{proof}

Combining Lemma \ref{gorenstein} and Theorem \ref{cohenmacaulay},
we see that cluster tilted algebras of finite or tame type are of
complexity at most one.

\begin{theorem} \label{mainclustertilted}
If $\Lambda$ is a cluster tilted algebra of finite or tame
representation type, then
\[ \sup \{ \cx^+_{\D^b(\Lambda)}(X,Y) \mid X,Y \in \D^b(\Lambda) \} = \left \{
\begin{array}{ll}
0 & \text{if $\Lambda$ is hereditary,} \\
1 & \text{otherwise}
\end{array}
\right. \]
\end{theorem}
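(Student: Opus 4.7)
The plan is to assemble Theorem \ref{mainclustertilted} directly from Lemma \ref{gorenstein} and Theorem \ref{cohenmacaulay}, by verifying the hypotheses of the former for any cluster tilted algebra $\Lambda$ of finite or tame representation type.

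The first step is to recall from \cite{KellerReiten} that $\Lambda$ is Gorenstein of dimension one. To apply Lemma \ref{gorenstein} it then suffices to know that $\CM(\Lambda)$ contains only finitely many non-isomorphic indecomposables. When $\Lambda$ has finite representation type this is immediate, since $\mod \Lambda$ itself already has only finitely many indecomposables and $\CM(\Lambda)$ is a full subcategory. When $\Lambda$ is tame this is precisely the content of Theorem \ref{cohenmacaulay}. Hence Lemma \ref{gorenstein} applies in both situations and yields that $\sup \{ \cx^+_{\D^b(\Lambda)}(X,Y) \mid X,Y \in \D^b(\Lambda) \}$ equals $0$ if $\gldim \Lambda < \infty$ and $1$ otherwise.

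It remains to translate the condition $\gldim \Lambda < \infty$ into the condition that $\Lambda$ be hereditary. Since $\Lambda$ is Gorenstein of dimension at most one, a standard argument shows that every module of finite projective dimension has projective dimension at most one; hence $\gldim \Lambda$ is either infinite or at most one. The latter case is precisely the condition that $\Lambda$ be hereditary, and substituting this equivalence into the conclusion of Lemma \ref{gorenstein} produces the statement of the theorem.

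I do not expect a genuine obstacle here: the substantive work has been absorbed into Theorem \ref{cohenmacaulay}, and the remaining deduction is purely formal. The only small consistency check is that the triangle equivalence $\D^b(\Lambda)/\D^{\perf}(\Lambda) \simeq \stabCM(\Lambda)$ used in Lemma \ref{gorenstein} transports $\cx^+$ faithfully from one side to the other, a point already covered by the appeal to \cite[Corollary~6.3.4]{Buchweitz} inside that lemma.
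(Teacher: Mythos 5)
Your proposal is correct and follows exactly the route the paper takes: Theorem \ref{mainclustertilted} is obtained by combining Lemma \ref{gorenstein} with Theorem \ref{cohenmacaulay} (plus the Gorenstein dimension one result of Keller--Reiten), and your extra remarks on the finite-type case and on converting ``finite global dimension'' into ``hereditary'' are accurate fillings-in of details the paper leaves implicit.
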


Next, we look at three examples of wild cluster-tilted algebras.
These examples show that Theorem~\ref{mainclustertilted}, and
hence also Theorem~\ref{cohenmacaulay}, does not generalize to
wild cluster tilted algebras. For background on mutations of
quivers with potentials, see \cite{BIRSm} and
\cite{DerksenWeymanZelevinsky}.

\begin{examples}
\begin{enumerate}
\item Let $\Lambda_0 = k[ \xymatrix{1 \ar@<0.5ex>[r]
\ar@<-0.5ex>[r]& 2 \ar[r] & 3}]$ be the path algebra of the wild
quiver $\xymatrix{1 \ar@<0.5ex>[r] \ar@<-0.5ex>[r] & 2 \ar[r] &
3}$. This is a hereditary algebra, and therefore
$\cx^*_{\D^b(\Lambda_0)}(X, Y) = 0$ for any $X, Y \in
\D^b(\Lambda_0)$. \item Let $\Lambda_1$ be the cluster tilted
algebra obtained from $\Lambda_0$ by mutation at the vertex $2$.
Then $\Lambda_1$ is the path algebra of the quiver
\[ \xymatrix{
& 2 \ar@<-0.4ex>@/^/[dl]_*-{\labelstyle x_2} \ar@/_/[dl]_*-{\labelstyle x_1} \\
1 \ar@<-0.5ex>@/^/[rr]^*-{\labelstyle z_1}
\ar@/_/[rr]^*-{\labelstyle z_2} && 3 \ar[ul]_{y} }
\] subject to the relations given by the cyclic derivatives of the
potential $x_1 z_1 y + x_2 z_2 y$, namely the relations $\{ x_1
z_1 + x_2 z_2, y x_1, y x_2, z_1 y, z_2 y \}$. Then
\[ \sup \{ \cx^+_{\D^b(\Lambda_1)}(X,Y) \mid X,Y \in \D^b(\Lambda_1) \} = 1. \]
\item Let $\Lambda_2$ be the cluster tilted algebra obtained from
$\Lambda_1$ by mutation at the vertex $1$. Then $\Lambda_2$ is the
path algebra of the quiver
\[ \xymatrix{
& 2 \ar@<0.6ex>@/^/[dr]^*-{\labelstyle y_1}
\ar@<-0.4ex>@/_/[dr]^*-{\labelstyle y_3} \ar[dr]^*-{\labelstyle y_2} \\
1 \ar@<0.4ex>@/^/[ur]^*-{\labelstyle x_1}
\ar@<0.5ex>@/_/[ur]^*-{\labelstyle x_2} && 3
\ar@<0.5ex>@/^/[ll]_*-{\labelstyle z_2}
\ar@<0.6ex>@/_/[ll]_*-{\labelstyle z_1} }
\] subject to the relations given by the cyclic derivatives of the
potential $x_1 y_1 z_2 + x_2 y_2 z_1 + x_1 y_3 z_1 - x_2 y_3 z_2$,
namely the relations $\{ x_1 y_3 + x_2 y_2, x_1 y_1 - x_2 y_3, y_1
z_2 + y_3 z_1, y_2 z_1 - y_3 z_2, z_1 x_1, z_2 x_2, z_1 x_1 - z_2
x_2 \}$. Then
\[ \sup \{ \cx^+_{\D^b(\Lambda_2)}(X,Y) \mid X,Y \in \D^b(\Lambda_2) \} = 2. \]
\end{enumerate}
\end{examples}

\begin{proof}
The claims for $\La_0$ are clear. For $\Lambda_1$, the
indecomposable projectives have the following composition
structures:
\[ \xymatrix{
& 1 \ar[ld]_{x_1} \ar[rd]^{x_2} &&& 2 \ar[d]^{y} &&& 3 \ar[ld]_{z_1} \ar[rd]^{z_2} \\
2 && 2 && 3 && 1 \ar[ld]_{x_2} \ar[rd]_{x_1} && 1 \ar[ld]^{-x_2} \ar[rd]^{x_1} \\
&&&&& 2 && 2 && 2 } \] We see that the three simple modules
satisfy
\[ \Omega_{\La_1}^1(S_1) = S_2^2 \qquad \qquad \Omega_{\La_1}^1(S_2) = S_3 \qquad \qquad
\Omega_{\La_1}^2(S_3) = S_2. \] Consequently every simple
$\Lambda_1$-module is eventually $\Omega$-periodic, and therefore
\[ \sup \{ \cx^+_{\D^b(\Lambda_1)}(X,Y) \mid X,Y \in \D^b(\Lambda_1) \} = 1. \]
For $\Lambda_2$, the indecomposable projective modules have the
following composition structures:
\[ \xymatrix{
&&&& 1 \ar[ld]_{z_1} \ar[rd]^{z_2} \\
&&& 3 \ar[lld]_{y_1} \ar[d]_{y_3} \ar[rrd]^<<<<<<{y_2} && 3 \ar[lld]_<<<<<<{-y_1} \ar[d]^{y_3} \ar[rrd]^{y_2} \\
& 2 \ar[ld]_{x_2} \ar[rd]^{x_1} && 2 \ar[ld]_{x_2} \ar[rd]^{x_1} && 2 \ar[ld]_{-x_2} \ar[rd]^{x_1}
&& 2 \ar[ld]_{-x_2} \ar[rd]^{x_1} \\
1 && 1 && 1 && 1 && 1 } \] $$\xymatrix{
& 2 \ar[ld]_{x_1} \ar[rd]^{x_2} \\
1 \ar[rd]_{z_1} && 1 \ar[ld]^{z_2} \\
& 3 } $$ \[ \xymatrix{
&&& 3 \ar[lld]_{y_1} \ar[d]_{y_3} \ar[rrd]^{y_2} \\
& 2 \ar[ld]_{x_2} \ar[rd]^{x_1} && 2 \ar[ld]_{x_2} \ar[rd]^{x_1} && 2 \ar[ld]_{-x_2} \ar[rd]^{x_1} \\
1 && 1 && 1 && 1
} \]
If we denote by $M_n$ the module with composition structure
\[ \xymatrix{
& 1 \ar[ld]_{x_1} \ar[rd]^{x_2} && 1 \ar[ld]_{x_1} \ar[rd]^{x_2} && \cdots && 1 \ar[ld]_{x_2}
\ar[rd]^{x_1} \\
3 && 3 && 3 & \cdots & 3 && 3 } \] (with $n$ composition factors
$S_1$ and $n+1$ composition factors $S_3$), then one can show by
direct calculation that
\[ \Omega_{\La_2}^2(M_n) = M_{n+2}. \]
Now note that
\[ S_3 = M_0 \qquad \text{and} \qquad \Omega_{\La_2}^3(S_1) = M_1, \]
so the rate of growth of the dimensions of the syzygies of these
simple modules is linear. Finally, note that
$\Omega_{\La_2}^1(S_2)$ is an extension of $S_1 \oplus S_1$ and
$S_3$, hence the rate of growth of the dimensions of its syzygies
is at most linear. This shows that
\[ \sup \{ \cx^+_{\D^b(\Lambda_2)}(X,Y) \mid X,Y \in \D^b(\Lambda_2) \} = 2. \qedhere \]
\end{proof}

\begin{remark}
As mentioned, the above example not only shows that
Theorem~\ref{mainclustertilted} does not generalize to wild
cluster tilted algebras. It also shows that the same is true for
Theorem~\ref{cohenmacaulay}, that is, there exist wild cluster
tilted algebras with infinitely many non-isomorphic indecomposable
Cohen-Macaulay modules. Namely, by Lemma \ref{gorenstein}, the
algebra in example (3) has this property.
\end{remark}

We conclude this paper with the following more general questions
on the complexity of wild cluster tilted algebras:

\noindent
\begin{questions}
\begin{enumerate}
\item What numbers occur as
\[ \sup \{ \cx^+_{\D^b(\Lambda)}(X,Y) \mid X,Y \in \D^b(\Lambda) \} \]
for $\Lambda$ a cluster tilted algebra of wild type? \item Given a
wild hereditary algebra $H$, do all the numbers in (1) occur as
\[ \sup \{ \cx^+_{\D^b(\Lambda)}(X,Y) \mid X,Y \in \D^b(\Lambda) \} \]
for some cluster tilted algebra $\Lambda$ of type $H$?
\end{enumerate}
\end{questions}


\begin{thebibliography}{BMRRT}
\bibitem[BoK]{BondalKapranov}A.\ I.\ Bondal, M.\ M.\ Kapranov,
\emph{Representable functors, Serre functors, and reconstructions},
(Russian) Izv.\ Akad.\ Nauk SSSR Ser.\ Mat.\ 53 (1989), no.\ 6,
1183-1205, 1337; translation in Math.\ USSR-Izv.\ 35 (1990), no.\ 3,
519-541.
\bibitem[BMR]{BMR} A.\ B.\ Buan, R.\ Marsh, I.\ Reiten, \emph{Cluster-tilted
algebras}, Trans.\ Amer.\ Math.\ Soc.\ 359 (2007), no.\ 1, 323-332
(electronic).
\bibitem[BMRRT]{BMRRT}A.\ B.\ Buan, R.\ Marsh, M.\ Reineke, I.\
Reiten, G.\ Todorov, \emph{Tilting theory and cluster
combinatorics}, Adv.\ Math.\ 204 (2006), no.\ 2, 572-618.
\bibitem[BIRS]{BIRSm}A.\ B.\ Buan, O.\ Iyama, I.\ Reiten, D.\ Smith, \emph{Mutation of
cluster-tilting objects and potentials}, to appear in American J.\ Math.
\bibitem[Buc]{Buchweitz} R.-O.\ Buchweitz, \emph{Maximal Cohen-Macaulay modules
and Tate-cohomology over Gorenstein rings}, preprint, 1987, 155 pp;
available at https://tspace.library.utoronto.ca/handle/1807/16682.
\bibitem[CCS]{CalderoChapotonSchiffler}P.\ Caldero, F.\ Chapoton,
R.\ Schiffler, \emph{Quivers with relations arising from clusters
($A_n$ case)}, Trans.\ Amer.\ Math.\ Soc.\ 358 (2006), no.\ 3,
1347-1364.
\bibitem[DWZ]{DerksenWeymanZelevinsky}H.\ Derksen, J.\ Weyman, A.\
Zelevinsky, \emph{Quivers with potentials and their representations
I: Mutations},  Selecta Math.\ (N.S.) 14 (2008), no.\ 1, 59-119.
\bibitem[FoZ]{FominZelevinsky}S.\ Fomin, A.\ Zelevinsky, \emph{Cluster algebras. I.
Foundations}, J.\ Amer.\ Math.\ Soc.\ 15 (2002), no.\ 2, 497-529
(electronic).
\bibitem[Hap]{Happel}D.\ Happel, \emph{On Gorenstein rings}, in
\emph{Representation Theory of Finite Groups and Finite-Dimensional
Algebras (Bielefeld, 1991)}, 389-404, Progr.\ Math.\ 95,
Birkh{\"a}user, 1991.
\bibitem[Iya]{Iyama1}O.\ Iyama, \emph{Maximal orthogonal subcategories
of triangulated categories satisfying Serre duality}, Oberwolfach
Report (2005), 355-357.
\bibitem[Kel]{Keller}B.\ Keller, \emph{On triangulated orbit
categories}, Doc.\ Math.\ 10 (2005), 551-581.
\bibitem[KeR]{KellerReiten}B.\ Keller, I.\ Reiten, \emph{Cluster-tilted
algebras are Gorenstein and stably Calabi-Yau}, Adv.\ Math.\ 211
(2007), no.\ 1, 123-151.
\bibitem[Kra]{Krause}H.\ Krause, \emph{Stable equivalence preserves
representation type}, Comment.\ Math.\ Helv.\ 72 (1997), no.\ 2,
266-284.
\bibitem[Ric]{Rickard}J.\ Rickard, \emph{Derived categories and
stable equivalence}, J.\ Pure Appl.\ Algebra 61 (1989), no.\ 3,
303-317.
\bibitem[Ri1]{Ringel1}C.\ M.\ Ringel, \emph{Tame algebras and integral
quadratic forms}, Lecture Notes in Mathematics, 1099,
Springer-Verlag, Berlin, 1984, xiii+376 pp.
\bibitem[Ri2]{Ringel2}C.\ M.\ Ringel, \emph{The spectral radius of the
Coxeter transformations for a generalized Cartan matrix} Math.\
Ann.\ 300 (1994), no.\ 2, 331-339.
\bibitem[Tak]{Takane}M.\ Takane, \emph{On the Coxeter transformation of
a wild algebra}, Arch.\ Math.\ (Basel) 63 (1994), no.\ 2, 128-135.
\end{thebibliography}
\end{document}